\def\R {{\mathbb R}}
\def\cE {{\mathcal{E}}}
\def\H01{{H_0^1(\Omega)}}
\def\L2{{L^2(\Omega)}}
\newtheorem{theorem}{Theorem}[section]
\newtheorem{proposition}{Proposition}
\newtheorem{algorithm}{Algorithm}[section]
\DeclareMathOperator*{\argmin}{arg\,min}
\newcommand{\TT}{\bar{T}}
\newcommand{\NN}{\bar{N}}
\newcommand{\LL}{\bar{L}}
\newcommand{\CC}{\bar{C}}
\newcommand{\bps}{\boldsymbol{\psi}}
\newcommand{\bX}{\boldsymbol{X}}
\newcommand{\bF}{\boldsymbol{F}}
\newcommand{\bW}{\boldsymbol{W}}
\newcommand{\bS}{\boldsymbol{\sigma}}
\newcommand{\bT}{\boldsymbol{\theta}}
\newcommand{\bU}{\boldsymbol{U}}
\newcommand{\Beq}{\begin{equation}}
\newcommand{\Eeq}{\end{equation}}
\newcommand{\beq}{\begin{equation*}}
\newcommand{\eeq}{\end{equation*}}
\newcommand{\bal}{\begin{align}}
\newcommand{\eal}{\end{align}}
\renewcommand{\L}{\langle}
\newcommand{\bp}{\begin{prob}}
\newcommand{\ep}{\end{prob}}
\newcommand{\bpr}{\begin{proof}}
\newcommand{\epr}{\end{proof}}
\DeclareRobustCommand{\ind}[1]{#1\index{#1}}
\newcommand{\bel}[1]{\begin{equation}\label{#1}}
\newcommand{\ee}{\end{equation}}
\numberwithin{equation}{section}
\numberwithin{proposition}{section}
\begin{document}
\title{{\bf Optimal personalized therapies in colon-cancer induced immune response using a Fokker-Planck framework  }}
\author{
{\sc Souvik Roy}$^{1}$, {\sc Suvra Pal}$\,^2$,\\
\mbox{}\\
{\small $^1$  Department of Mathematics, The University of Texas at Arlington, Arlington, TX 76019, USA} \\ 
{\small $^2$ Department of Mathematics, The University of Texas at Arlington, Arlington, TX 76019, USA}\\}

\date{Email: souvik.roy@uta.edu $^1$; suvra.pal@uta.edu $^2$}

\maketitle

\begin{abstract}
In this paper, a new \ind{stochastic} framework to determine \ind{optimal} combination therapies in \ind{colon cancer}-induced  \ind{immune} response is presented. The dynamics of colon cancer is described through an It\"o \ind{stochastic} process, whose \ind{probability density function} evolution is governed by the \ind{Fokker-Planck} equation. An \ind{open-loop} \ind{control} \ind{optimization} problem is proposed to determine the optimal combination therapies. Numerical results with \ind{combination therapies} comprising of the \ind{chemotherapy} drug \ind{Doxorubicin} and \ind{immunotherapy} drug \ind{IL-2} validate the proposed framework.

\end{abstract}

{ \bf Keywords}: {Fokker-Planck optimization, non-linear conjugate gradient, immunotherapy, chemotherapy.}\\


\subsection{Introduction}

Colon cancer is a leading cause of global cancer related deaths \cite{Fitz2013}. The lack of early symptoms forces the detection of colon cancer to take place at the metastatic phase of the cancer \cite{Schm2012}. Thus, it becomes important to devise fast and accurate treatment strategies for cure. In this context, combination therapies have been clinically shown to be an effective strategy for combating cancer, in comparison to mono-therapy (see \cite{Mokhtari2017}).  Conventional mono-therapeutic techniques are indiscriminate in choosing actively growing cells that leads to the death of not only cancerous cells but also healthy cells. E.g., chemotherapy drugs can be toxic and lead to multiple side effects and risks, by weakening the patient's immune system targeting the bone-marrow cells \cite{Patridge2001}. This leads to increased susceptibility to secondary infections. However, with combination therapies, even though it might still be toxic if there is a presence of a chemotherapy drug, the toxicity effect is significantly diminished due to different targets being affected. Moreover, since the effect of combination therapies work in synergy, lower dosages of the individual drugs in the combination therapy are required for \ind{treatment}, which further reduces the toxic effects \cite{Mokhtari2013}.

To develop optimal combination therapies in colon cancer, it is important to understand relevant \ind{biomarkers} that govern the progression of the cancer \cite{Chang2021}. One such biomarker is the immune response to the cancerous cells.  It has been observed that onsite immune reactions at the tumor locations and prognosis are highly correlated and is independent of the size of the tumor \cite{Galon2006}. Furthermore, a high expression
of various \ind{immune pathways}, like Th1, Th17, are associated with a poor prognosis or prolonged disease-free survival for patients with colon cancer \cite{Tosolini2011}. Thus, it becomes important to develop optimal personalized treatments for colon cancer patients, taking into account the various types of \ind{immune cells}, their numbers and interaction with the cancer cells and each other. Since drugs for treatment of colon cancer, like chemotherapeutic drugs, immunotherapies, are quite costly, it is expensive to perform in-vitro and vivo experimental studies for testing effects of such drugs. An alternative cost effective option is to develop computational frameworks for testing optimal combination drug dosages \cite{Chang2022}. This can be done using \ind{pharmacokinetic} models, that are given by a set of ordinary or \ind{partial differential equations} (ODE or PDE) \index{ordinary differential equations}.

There are several dynamical models that have been developed to  describe the immune response and interactions in colon cancer and associated therapies. In \cite{Ballesta2012}, a combined compartmental model with that of irinotecan is used to describe the physiology of colon cancer. A set of ODEs were used to represent a 3D structure of colon cancer in \cite{Johnston2007}. In \cite{Koma2002}, the authors use  describe the phenomenon of \ind{tumorigenesis} in colon cancer using mathematical modeling. The authors in \cite{Lo2013}, describe the initiation of colon cancer and its link with colitis using a mathematical framework. In \cite{Powa2013}, the authors use a pharmacokinetic \ind{cellular automata} model to incorporate the \ind{cytotoxic effects} of chemotherapy drugs. In \cite{Haupt2021}, \ind{dynamical systems} are used to describe multiple pathways in colon cancer. For a detailed review of various mathematical models for colon cancer-induced immune response, we refer the reader to \cite{Ballesta2014}.

We contribute to the field of pharmacokinetic cancer research by presenting an effective approach to develop \ind{personalized therapies} for colon cancer-induced immune response. The starting point of this estimation process is the dynamical model for colon cancer-induced immune response, given in \cite{pillis2014}. The model describes the evolution of four variables: the \ind{tumor cell} count, concentration of the \ind{Natural Killer cells}, concentration of the CD$8^+$ cells, concentration of the remaining lymphocytes. We extend the dynamical model in \cite{pillis2014} to an \ind{It\^o stochastic} ODE system that takes into account the randomness of the colon cancer-induced immune response dynamics. We also model a combination therapy comprising of a chemotherapy drug and an immunotherapy drug as a control in the stochastic ODE system. The goal is to obtain an optimal combination therapy strategy that drives the cancer-induced immune response state. Since solving an optimal control problem using stochastic state equations is difficult, we use a more convenient framework for obtaining optimal combination therapies through the Fokker-Planck (FP) equations, that governs the evolution of the joint probability density function (PDF) associated to the random variables in stochastic process. Such FP control frameworks have been used for problems arising in control of collective \index{collective motion} and \ind{crowd motion} \cite{Roy2016,Roy2018a}, investigating \ind{pedestrian motion} from a game theoretic perspective \index{game theory} \cite{Roy2017}, reconstructing cell-membrane potentials \cite{Annun2021}, \ind{mean field control} problems \cite{Borzi2020}, controlling production of Subtilin \cite{Thal2016}. Very recently, in \cite{Roy_cancer1,Roy_cancer2}, FP frameworks were used for \ind{parameter estimation} in colon cancer-induced dynamical systems. Till date, this is the first work that considers the FP framework to devise \ind{optimal combination treatment strategies} for colon cancer-induced immune response.

In the next section, we describe a a FP control framework for devising optimal treatments in colon cancer-induced immune response systems. Section \ref{sec:theory} is concerned with the theoretical properties of the FP optimization problems. In Section \ref{sec:numerical_FP}, we describe the numerical discretization of the optimality system. In Section \ref{sec:results}, we obtain the optimal combination therapies, comprising of Doxorubicin and IL-2 for two types of simulated colon cancer patients, and show the correspondence of the results with experimental findings. We end with a section of conclusions.

\subsection{The control framework based on Fokker-Planck equations}\label{sec:FP}

The interaction between immune cells and colon cancer growth in a patient can be modeled using a coupled system of ODEs. The starting point is the set of equations given in the paper by dePillis \cite{pillis2014}. The following are the variables associated with different types of cell populations that appear as model variables
\begin{enumerate}

\item $T(\tau)$-  tumor cell population number (cells).
\item $N(\tau)$-  natural killer (NK) cells concentration per liter of blood (cells/L).
\item $L(\tau)$-  cytotoxic T \ind{lymphocytes} (CD8$^+$) concentration per liter of blood (cells/L).
\item $C(\tau)$-  other lymphocytes concentration per liter of blood (cells/L).

\end{enumerate}

The governing system of ODEs, representing the dynamics of the above defined cell populations are given as follows
\begin{equation}\label{eq:ODE}
\begin{aligned}
&\dfrac{dT}{d\tau} = aT-abT^2 - DT-cNT- \alpha_1 u_1(t)T,~ T(0) = T_0,\\
&\dfrac{dN}{d\tau} = eC-fN-pNT- \alpha_2 u_1(t)N + \beta_2u_2(t)N,~ N(0) = N_0,\\
&\dfrac{dL}{d\tau} = mL + j\dfrac{T}{k+T}L-qLT+(r_1N+r_2C)T - \alpha_3 u_1(t)L + \beta_3u_2(t)L,~ L(0) = L_0,\\
&\dfrac{dC}{d\tau} = \alpha-\beta C- \alpha_4 u_1(t)C ,~ C(0) = C_0,
\end{aligned}
\end{equation}
where $D = d\dfrac{1}{s(T/L)^l+1}$, $T_0,N_0,L_0,C_0$ represent the initial conditions for $T,N,L,C$, respectively, and $u_1,u_2$ represent dosages of Doxorubicin and IL-2, respectively. 

The parameters of the system \eqref{eq:ODE} are defined in \cite{pillis2014}.
To stabilize the solutions of the numerical methods, we non-dimensionalize the ODE system \eqref{eq:ODE} using the following non-dimensionalized state variables and parameters
\begin{equation}\label{eq:Nvar}
\begin{aligned}
& \bar{T} = k_1 T,~ \bar{N} = k_2 N,~ \bar{L} = k_3 L,~ \bar{C} = k_4 C,~t = k_5 \tau,\\
& \bar{a} = \dfrac{a}{k_5},~\bar{b} = \dfrac{b}{k_1},~\bar{c} = \dfrac{c}{k_2k_5},~\bar{d} = \dfrac{d}{k_5},~\bar{e} = \dfrac{ek_2}{k_4k_5},~\bar{f} = \dfrac{f}{k_5},~\bar{p} = \dfrac{10^{10}p}{k_1k_5},~\bar{m} = \dfrac{m}{k_5},~ \bar{j} = \dfrac{j}{k_5},~\bar{k} = k_1k,\\
&\bar{q} = \dfrac{10^8q}{k_5 k_1},~\bar{r}_1 = \dfrac{r_1k_3}{k_1k_2k_5},~ \bar{r}_2 = \dfrac{r_2k_3}{k_1k_4k_5},~ \bar{s} = 250 s,~ \bar{\alpha} = \dfrac{\alpha k_4}{k_5},~\bar{\beta} = \dfrac{\beta}{k_5} ~\bar{l} = l,\\
&\bar{\alpha}_i = \dfrac{\alpha_i }{k_5},~i = 1,\cdots,4,\\
&\bar{\beta}_i = \dfrac{\beta_i}{k_5}~i = 1,2.
\end{aligned}
\end{equation}
Then the transformed \ind{non-dimensional} ODE system is given as follows
\begin{equation}\label{eq:NODE}
\begin{aligned}
&\dfrac{d\TT}{dt} = \bar{a}\TT-\bar{a}\bar{b}\TT^2 -\bar{D}\TT-  \bar{c}\NN\TT-\bar{\alpha}_1 \bar{u}_1(t)\TT,~ \TT(0) = \TT_0\\
&\dfrac{d\NN}{dt} = \bar{e}\CC-\bar{f}\NN-10^{-10}\bar{p}\NN\TT - \bar{\alpha}_2 \bar{u}_1(t)\NN + \bar{\beta}_2 \bar{u}_2(t)\NN,~ \NN(0) = \NN_0\\
&\dfrac{d\LL}{dt} = \bar{m}\LL + \bar{j}\dfrac{\TT}{\bar{k}+\TT}\LL-10^{-8}\bar{q}\LL\TT+(\bar{r}_1\NN+\bar{r}_2\CC)\TT- \bar{\alpha}_3 \bar{u}_1(t)\LL + \bar{\beta}_3 \bar{u}_2(t)\LL,~ \LL(0) = \LL_0\\
&\dfrac{d\CC}{dt} = \bar{\alpha}-\bar{\beta} \CC- \bar{\alpha}_4 \bar{u}_1(t)\CC ,~ \CC(0) = \CC_0,
\end{aligned}
\end{equation}
where $\bar{D} = \bar{d}\dfrac{(\LL/\TT)^{\bar{l}}}{4\bar{s}\cdot10^{-3}\cdot (k_1/k_3)^{\bar{l}}+(\LL/\TT)^{\bar{l}}}$.

The compact form of the aforementioned system of ODEs, given in \eqref{eq:NODE}, is as follows:
\begin{equation}\label{eq:ODE_compact}
\begin{aligned}
&\dfrac{d\bX}{dt} = \bF(\bX,\bU),\\
&\bX(0) = \bX_0,
\end{aligned}
\end{equation}
where $\bX(t) = (\TT(t),\NN(t),\LL(t),\CC(t))^T$.

We extend the ODE system \eqref{eq:NODE} to include stochasticity present in the dynamics. For this purpose, we consider the  It\^o stochastic differential equation corresponding to \eqref{eq:NODE} 
\begin{equation}\label{eq:ItoODE}
\begin{aligned}
&d\TT = (\bar{a}\TT(1-\bar{b}\TT) - \bar{c}\NN\TT-\bar{D}\TT- \bar{\alpha}_1 \bar{u}_1(t)\TT)~dt + \sigma_1(\TT) ~dW_1(t),~ \TT(0) = \TT_0,\\
&d\NN = (\bar{e}\CC-\bar{f}\NN-10^{-10}\bar{p}\NN\TT- \bar{\alpha}_2 \bar{u}_1(t)\NN + \bar{\beta}_2 \bar{u}_2(t)\NN)~dt + \sigma_2 (\NN)~ dW_2(t),~ \NN(0) = \NN_0,\\
&d\LL = (\bar{m}\LL + \bar{j}\dfrac{\TT}{\bar{k}+\TT}\LL-10^{-8}\bar{q}\LL\TT+(\bar{r}_1\NN+\bar{r}_2\CC)\TT- \bar{\alpha}_3 \bar{u}_1(t)\LL + \bar{\beta}_3 \bar{u}_2(t)\LL)~dt + \sigma_3(\LL) ~dW_3(t),\\
&\LL(0) = \LL_0,\\
&d\CC= (\bar{\alpha}-\bar{\beta} \CC- \bar{\alpha}_4 \bar{u}_1(t)\CC )~dt + \sigma_4 (\CC)~ dW_4(t),~ \CC(0) = \CC_0,
\end{aligned}
\end{equation}
where $dW_i,~ i = 1,2,3,4$ are one-dimensional \ind{Wiener processes} and $\sigma_i,~ i=1,2,3,4$  are positive constants. The equation \eqref{eq:ItoODE} can be written using a compact notation as follows
 \begin{equation}\label{eq:ItoODE_compact}
\begin{aligned}
&d\bX= \bF(\bX,\bU)~dt + \bS(\bX)~d\bW(t) ,\\
&\bX(0) = \bX_0,
\end{aligned}
\end{equation}
where 
\[
d\bW(t) = 
\begin{pmatrix}
dW_1(t)&dW_2(t)&dW_3(t)&dW_4(t) \\
\end{pmatrix}^T
\]
is a four-dimensional Wiener process vector with \ind{stochastically independent} components and 
\[
\bS = diag
\begin{pmatrix}
\sigma_1 &\sigma_2 &\sigma_3 &\sigma_4 
\end{pmatrix}
\]
is the dispersion matrix.

We now describe the PDF of the stochastic process \eqref{eq:ItoODE_compact}, confined in a \ind{Lipschitz domain} $\Omega$, by virtue of a \ind{reflecting barrier} on $\partial \Omega$. This is motivated by the \ind{maximum cell carrying capacity}. Thus, $X(t) \in \Omega\subset\R^4_+ = \lbrace x\in\mathbb{R}^4:x_i \geq 0,~ i= 1,2,3,4 \rbrace$, 
 Let $x = (x_1,x_2,x_3,x_4)^T$. Define $f(x,t)$ as the PDF for the stochastic process described by \eqref{eq:ItoODE_compact}, i.e., $f(x,t)$ is the probability of $\bX(t)$ assuming the value $x$ at time $t$. Then the PDF of $\bX(t)$ evolves through the following Fokker-Planck (FP) equations
\begin{equation}\label{eq:FP}
\begin{aligned}
&\partial_t f(x,t)+\nabla \cdot (\bF(x,\bU)~f(x,t)) = 
\frac{1}{2}\nabla \cdot (\bS^2(x) \nabla f(x,t)),\\
&f(x,0) =  f_0(x),
\end{aligned}
\end{equation}
where $f_0(x)$ is non-negative with mass equals one,
 and
 $\bU(t) = (\bar{u}_1(t),\bar{u}_2(t))$ in the admissible set
\[
U_{ad} = \lbrace \bU \in L^2([0,T]): 0 \leq \bar{u}_i(t) \leq D_i,~ D_i>0,~ \forall t\in[0,T],~ i = 1,2\rbrace
\]
Here, the FP domain is $Q=\Omega\times(0,T_f)$, where $T_f$ is the final time and $f_0(x)$ represents the distribution of the initial state $X_0$ of the process. The FP equation \eqref{eq:FP} is associated with the \ind{no-flux boundary conditions}. To describe this, we write \eqref{eq:FP} in flux form as 
\begin{equation}\label{eq:FPflux}
\partial_t f(x,t)-\nabla\cdot \mathcal{H}=0, \qquad f(x,0) =  f_0(x) ,
\end{equation}
where the components of the flux $\mathcal{H}$ is given as follows
\begin{equation}\label{eq:flux_def}
\mathcal{H}_j(x,t;f) = \frac{\sigma_j^2}{2}\partial_{x_j} f-\bF_j(x,\bU)f, ~ j = 1,2,3,4.
\end{equation}
Then the no-flux boundary conditions are
\begin{equation}\label{eq:nfbc}
\mathcal{H}\cdot \hat n = 0 \qquad \mbox{ on } \partial\Omega\times(0,T_f),
\end{equation}
with $\hat n$ as the unit outward normal on $\partial\Omega$. 

To obtain the \ind{optimal combination therapy} function vector $\bU$, we solve the following optimization problem
\begin{equation}\label{eq:min_problem}
\begin{aligned}
\bU^* = \argmin_{\bU \in U_{ad}} J(f,\bU) := &\dfrac{\alpha}{2} \int_Q (f(x,t) - f^{*}(x,t))^2~dx + \dfrac{\nu_1}{2}\int_0^T\|u_1(t)\|^2~dt + \dfrac{\nu_2}{2}\int_0^T\|u_2(t)\|^2~dt,
\end{aligned}
\end{equation}
subject to the FP system \eqref{eq:FP},\eqref{eq:nfbc}, where the desired PDF is $f^*(x,t)$.

\subsection{Theoretical results} \label{sec:theory}
In this section, we describe some theoretical results related to the \ind{minimization problem} \eqref{eq:min_problem}. One can also find similar results in  \cite{MA,Roy2016,Roy2018a}. For this purpose, we denote the FP system  \eqref{eq:FP},\eqref{eq:nfbc} as $\cE(f_0,\bU)=0$. The \ind{existence} and \ind{uniqueness} of solutions of \eqref{eq:FP} is given in the following proposition
\begin{proposition}\label{th:proposition1}
Assume $f_0 \in H^1(\Omega)$ with $f_0$ non-negative, and $\bU\in U_{ad}$. Then, there exists an unique non-negative solution of $\cE(f_0,\bU)=0$ given by
$f \in L^2([0,T_f];H^1(\Omega)) \cap C([0,T_f];L^2(\Omega))$.
\end{proposition}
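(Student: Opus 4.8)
\emph{Proof strategy.} The plan is to treat $\cE(f_0,\bU)=0$, for a fixed control $\bU\in U_{ad}$, as a linear second-order parabolic initial/boundary-value problem whose principal part $\tfrac12\bS^2=\mathrm{diag}(\sigma_j^2/2)$ (with $\sigma_j>0$) is constant and uniformly elliptic, and to invoke the classical Faedo--Galerkin existence theory for such equations together with a maximum-principle argument for non-negativity; analogous arguments appear in \cite{MA,Roy2016,Roy2018a}. As a preliminary I would record that, on the bounded Lipschitz domain $\Omega\subset\R^4_+$ dictated by the carrying capacity, the drift $\bF(\cdot,\bU)$ of \eqref{eq:NODE} is bounded and measurable: each component is polynomial in $x$ (hence bounded on the bounded set $\Omega$), the saturation factor $\bar D$ takes values in $[0,\bar d]$ by its very form, and the control enters only through terms $\bar\alpha_i\bar u_i(t)x_i$ and $\bar\beta_i\bar u_2(t)x_i$, so that $\|\bF(\cdot,\bU(t))\|_{L^\infty(\Omega)}\le C(1+|\bU(t)|)$ with the right-hand side in $L^2(0,T_f)\subset L^1(0,T_f)$. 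Also $f_0\in H^1(\Omega)\subset L^2(\Omega)$, which already suffices for the claimed regularity class.

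Next I would pass to the weak formulation: multiplying the flux form \eqref{eq:FPflux} by $v\in H^1(\Omega)$ and integrating by parts over $\Omega$, the boundary integral is precisely $\int_{\partial\Omega}(\mathcal H\cdot\hat n)\,v$, which vanishes by the no-flux condition \eqref{eq:nfbc}; thus no boundary term survives and the divergence is transferred onto $v$. This produces the time-dependent bilinear form $a(t;\cdot,\cdot)$ on $H^1(\Omega)\times H^1(\Omega)$,
\[
a(t;w,v)=\sum_{j=1}^4\frac{\sigma_j^2}{2}\int_\Omega\partial_{x_j}w\,\partial_{x_j}v\,dx-\int_\Omega w\,\bF(x,\bU(t))\cdot\nabla v\,dx,
\]
and the evolution problem $\frac{d}{dt}(f,v)_{L^2(\Omega)}+a(t;f,v)=0$ in $H^1(\Omega)^*$, $f(0)=f_0$. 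Since the divergence now sits on $v$, only $\bF(\cdot,\bU(t))\in L^\infty(\Omega)$ is required; then $a(t;\cdot,\cdot)$ is bounded on $H^1(\Omega)\times H^1(\Omega)$ with a bound in $L^1(0,T_f)$, is measurable in $t$, and satisfies a G\aa rding inequality: ellipticity gives $\sum_j\frac{\sigma_j^2}{2}\|\partial_{x_j}v\|_{L^2}^2\ge\mu\|\nabla v\|_{L^2}^2$ with $\mu=\tfrac12\min_j\sigma_j^2>0$, and Young's inequality yields $\big|\int_\Omega v\,\bF\cdot\nabla v\big|\le\tfrac{\mu}{2}\|\nabla v\|_{L^2}^2+\tfrac{1}{2\mu}\|\bF\|_{L^\infty}^2\|v\|_{L^2}^2$, so that $a(t;v,v)\ge\tfrac{\mu}{2}\|\nabla v\|_{L^2}^2-\lambda(t)\|v\|_{L^2}^2$ with $\lambda\in L^1(0,T_f)$. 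I would then run Faedo--Galerkin in a basis of $H^1(\Omega)$: the Galerkin systems are solvable on $[0,T_f]$, the a priori bounds above (via Gr\"onwall with the $L^1$-coefficient $\lambda$) give weak compactness in $L^2([0,T_f];H^1(\Omega))$, and the limit is the weak solution $f$, with $\partial_t f\in L^2([0,T_f];H^1(\Omega)^*)$; the Lions--Magenes embedding $L^2(0,T_f;H^1(\Omega))\cap H^1(0,T_f;H^1(\Omega)^*)\hookrightarrow C([0,T_f];L^2(\Omega))$ then yields $f\in C([0,T_f];L^2(\Omega))$. Uniqueness follows from linearity: the difference of two solutions has zero data, and the energy estimate plus Gr\"onwall forces it to vanish.

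For non-negativity I would test the weak equation with $-f^-$, where $f^-=\max\{-f,0\}\in L^2([0,T_f];H^1(\Omega))$ is admissible. Using $\langle\partial_t f,-f^-\rangle=\tfrac12\tfrac{d}{dt}\|f^-\|_{L^2}^2$, the identity $\nabla f\cdot\nabla(-f^-)=|\nabla f^-|^2$ a.e., and once more the no-flux condition to discard the boundary term, one obtains
\[
\frac12\frac{d}{dt}\|f^-(t)\|_{L^2(\Omega)}^2+\mu\|\nabla f^-(t)\|_{L^2}^2\le\Big|\int_\Omega f\,\bF(x,\bU(t))\cdot\nabla f^-\,dx\Big|\le\mu\|\nabla f^-\|_{L^2}^2+\frac{1}{4\mu}\|\bF(\cdot,\bU(t))\|_{L^\infty}^2\|f^-\|_{L^2}^2,
\]
hence $\frac{d}{dt}\|f^-(t)\|_{L^2}^2\le2\lambda(t)\|f^-(t)\|_{L^2}^2$ with $\lambda\in L^1(0,T_f)$; since $f_0\ge0$ gives $f^-(0)=0$, Gr\"onwall yields $f^-\equiv0$, i.e. $f\ge0$.

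The step I expect to be the main obstacle is pushing the G\aa rding estimate and the energy/Gr\"onwall arguments through under the weak regularity of the coefficients: $\bF$ is only $L^\infty$ in space and its time dependence, via $\bU\in U_{ad}$, is merely $L^2$, so one must keep the divergence on the test function throughout and use the $L^1$-in-time form of Gr\"onwall's lemma rather than the textbook $L^\infty$ one. By contrast, the non-smoothness of $\partial\Omega$ creates no difficulty, since every step uses only the Hilbert-space theory on $H^1(\Omega)$ and the natural (no-flux) boundary condition, never boundary regularity.
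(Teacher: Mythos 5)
Your proof is sound, but note that the paper does not actually prove Proposition~\ref{th:proposition1}: it only states it and points to analogous results in \cite{MA,Roy2016,Roy2018a} (and to \cite{Tao} for higher regularity). Your Faedo--Galerkin construction with the time-dependent bilinear form, the G\aa rding inequality, the Lions--Magenes embedding for continuity in $L^2(\Omega)$, and the Stampacchia truncation $-f^-$ for non-negativity is exactly the standard machinery those references rely on, so you have supplied the argument the authors chose to omit rather than taken a different route. Two small remarks. First, your caution about the time regularity of the drift is unnecessary here: $U_{ad}$ imposes the pointwise bounds $0\le \bar u_i(t)\le D_i$, so $\bF(\cdot,\bU(t))$ is uniformly bounded in $t$ and the ordinary $L^\infty$-in-time Gr\"onwall lemma suffices; the genuine point to check is the one you do check, namely that $\bar D$ and $\bar j\,\TT/(\bar k+\TT)$ stay bounded (and measurable) as $\TT\to 0$, since they are not continuous there. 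Second, the identity $\langle\partial_t f,-f^-\rangle=\tfrac12\tfrac{d}{dt}\|f^-\|_{L^2}^2$ for $f\in L^2(0,T_f;H^1(\Omega))$ with $\partial_t f\in L^2(0,T_f;H^1(\Omega)^*)$ needs the standard truncation lemma (it is not the plain Lions--Magenes chain rule), which is worth citing explicitly; with that, your uniqueness claim is in fact slightly stronger than the statement, since you get uniqueness among all weak solutions, not just non-negative ones.
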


Under the assumptions of higher regularity of $\partial \Omega$, the boundary of $\Omega$, one can also obtain $H^2(\Omega)$ regularity of the solution of \eqref{eq:FP} (see \cite{Tao}). Next we state the \ind{conservativeness} property of \eqref{eq:FP}, which can be proved using straightforward applications of weak formulation, integration by parts, and divergence theorem for the flux $\mathcal{H}$.
\begin{proposition}\label{th:FPcons}
The FP system given in \eqref{eq:FP},\eqref{eq:nfbc} is conservative.
\end{proposition}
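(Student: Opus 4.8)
The plan is to show that the total mass $t \mapsto \int_\Omega f(x,t)\,dx$ is constant in time and equal to $\int_\Omega f_0(x)\,dx = 1$. By Proposition \ref{th:proposition1}, the solution satisfies $f \in L^2([0,T_f];H^1(\Omega)) \cap C([0,T_f];L^2(\Omega))$, and this is precisely the regularity that makes the manipulations below rigorous: the spatial $H^1$-regularity lets one use the weak (Gauss--Green) formulation on the Lipschitz domain $\Omega$, and the $C([0,T_f];L^2(\Omega))$-regularity lets one differentiate $\int_\Omega f\,dx$ in $t$.

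First I would write the weak formulation of the flux form \eqref{eq:FPflux}: for a.e.\ $t\in(0,T_f)$ and every test function $\varphi \in H^1(\Omega)$,
\[
\frac{d}{dt}\int_\Omega f(x,t)\,\varphi(x)\,dx + \int_\Omega \mathcal{H}(x,t;f)\cdot\nabla\varphi(x)\,dx = 0,
\]
where the boundary contribution $\int_{\partial\Omega}(\mathcal{H}\cdot\hat n)\varphi\,ds$ has already been discarded on account of the no-flux condition \eqref{eq:nfbc}. Since $\Omega$ is bounded, the constant function $\varphi \equiv 1$ lies in $H^1(\Omega)$ with $\nabla\varphi \equiv 0$; inserting it annihilates the flux integral and leaves $\frac{d}{dt}\int_\Omega f(x,t)\,dx = 0$.

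Integrating this identity in time and using $f(\cdot,0)=f_0$ together with the normalization $\int_\Omega f_0(x)\,dx = 1$ gives $\int_\Omega f(x,t)\,dx = 1$ for all $t\in[0,T_f]$, which is the claimed conservativeness. Equivalently, one can argue on the strong form \eqref{eq:FP}: integrate over $\Omega$, pull the time derivative outside the integral (justified by continuity in $L^2(\Omega)$), and apply the divergence theorem separately to $\nabla\cdot(\bF f)$ and to $\tfrac12\nabla\cdot(\bS^2\nabla f)$; each term produces only a boundary integral of a component of the flux $\mathcal{H}_j$ against $\hat n$, and all of these vanish by \eqref{eq:nfbc}.

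The only point that genuinely needs care — and it is a mild one — is justifying the divergence theorem / integration by parts under mere $H^1$-in-space regularity rather than classical $C^1$ regularity. The hard part is therefore bookkeeping rather than analysis: I would handle it by starting from the weak formulation, in which the Gauss--Green formula on Lipschitz domains is available and no derivative of $f$ beyond those guaranteed by Proposition \ref{th:proposition1} is ever required, and by invoking the $C([0,T_f];L^2(\Omega))$-regularity to legitimize differentiating the mass functional in $t$.
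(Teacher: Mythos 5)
Your proof is correct and follows exactly the route the paper indicates (the paper only sketches it: weak formulation, integration by parts, and the divergence theorem applied to the flux $\mathcal{H}$, with the no-flux condition \eqref{eq:nfbc} killing the boundary term). Testing against $\varphi\equiv 1$ and invoking the regularity from Proposition \ref{th:proposition1} to justify the Gauss--Green formula on the Lipschitz domain is precisely the intended argument.
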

The next proposition states and proves the $L^2$ stability property of \eqref{eq:FP}.
\begin{proposition}\label{th:stability}
The FP system \eqref{eq:FP},\eqref{eq:nfbc} solution, given by $f$, satisfies the following $L^2$ \ind{stability} property
\begin{equation}\label{eq:FPstability}
\| f(t) \|_{L^2(\Omega)}
\le  \|  f_0 \|_{L^2(\Omega)}  \exp \left(  \|\bS^{-1}\|^2_2 N^2 t \right),
\end{equation}
where $N = \sup_{\Omega\times U} |F(x,\bT)|$.
\end{proposition}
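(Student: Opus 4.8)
The plan is to run the classical parabolic energy estimate for the boundary value problem \eqref{eq:FP}--\eqref{eq:nfbc}, working in the regularity class $f \in L^2([0,T_f];H^1(\Omega)) \cap C([0,T_f];L^2(\Omega))$ supplied by Proposition~\ref{th:proposition1}. First I would test the weak form of \eqref{eq:FP} with $f$ itself. Since $\partial_t f$ lies in $L^2([0,T_f];H^1(\Omega)^*)$ and $f \in L^2([0,T_f];H^1(\Omega))$, the Lions--Magenes lemma gives $\langle \partial_t f, f \rangle = \tfrac12 \tfrac{d}{dt}\|f(t)\|_{L^2(\Omega)}^2$, so that no extra smoothness of $f$ needs to be assumed; rigorously this identity is obtained via a Galerkin approximation or by mollifying in time.

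Next, I would integrate by parts in both the convective term $\int_\Omega \nabla\cdot(\bF f)\,f\,dx$ and the diffusive term $\tfrac12\int_\Omega \nabla\cdot(\bS^2\nabla f)\,f\,dx$. Besides the volume contributions, this generates exactly two boundary integrals over $\partial\Omega$: the term $\int_{\partial\Omega}(\bF\cdot\hat n)\,f^2\,ds$ from convection and the term $\tfrac12\int_{\partial\Omega}(\bS^2\nabla f\cdot\hat n)\,f\,ds$ from diffusion. Reading the no-flux condition \eqref{eq:nfbc} componentwise through \eqref{eq:flux_def} shows that $\tfrac12\,\bS^2\nabla f\cdot\hat n = (\bF\cdot\hat n)\,f$ on $\partial\Omega$, so the two boundary integrals coincide and cancel. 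Using that $\bS$ is diagonal, so $\bS^2\nabla f\cdot\nabla f = |\bS\nabla f|^2$, what survives is the energy identity
\[
\frac12\frac{d}{dt}\|f(t)\|_{L^2(\Omega)}^2 + \frac12\int_\Omega |\bS\nabla f|^2\,dx = \int_\Omega f\,\bF\cdot\nabla f\,dx .
\]

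To close the estimate I would rewrite the right-hand side as $\int_\Omega (f\,\bS^{-1}\bF)\cdot(\bS\nabla f)\,dx$, bound the integrand pointwise by Cauchy--Schwarz in $\R^4$, and then apply Young's inequality so that the resulting multiple of $\int_\Omega |\bS\nabla f|^2\,dx$ is absorbed into a fraction of the diffusion term on the left, the remaining nonnegative gradient term being discarded. Since $|\bS^{-1}\bF|^2 \le \|\bS^{-1}\|_2^2 |\bF|^2 \le \|\bS^{-1}\|_2^2 N^2$ with $N = \sup_{\Omega\times U}|F(x,\bT)|$, this leaves a differential inequality of the form $\tfrac{d}{dt}\|f(t)\|_{L^2(\Omega)}^2 \le 2\|\bS^{-1}\|_2^2 N^2 \|f(t)\|_{L^2(\Omega)}^2$, and Gronwall's inequality followed by taking square roots yields \eqref{eq:FPstability}.

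The one place that requires genuine care is the cancellation of the two boundary integrals: it hinges on the precise $\tfrac12$-weighting built into the flux \eqref{eq:flux_def}, so the no-flux condition must be invoked in exactly the right form. A secondary, purely technical point is justifying the energy identity at the $H^1$-in-space / $L^2$-in-time regularity of Proposition~\ref{th:proposition1} rather than for classical solutions, which is handled by the standard time-mollification argument. Beyond these, the argument is routine; note that the precise constant in the exponent of \eqref{eq:FPstability} depends on which Young's inequality split is used and is otherwise immaterial.
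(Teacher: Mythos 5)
Your proposal is correct and follows essentially the same route as the paper: test the equation with $f$ itself, integrate by parts, estimate the convective term against the diffusion term via Young's inequality with the bound $N$ on $|\bF|$, and conclude with Gronwall. The only difference is that you spell out the cancellation of the boundary integrals via the no-flux condition \eqref{eq:nfbc} and the Lions--Magenes justification of the energy identity, both of which the paper leaves implicit; your remark that the exact constant in the exponent depends on the Young split is also apt, since the paper's stated choice of $k$ is itself slightly loose on this point.
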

\begin{proof}
Multiplying \eqref{eq:FP} with the test function $\psi=f(\cdot,t)$ and an integration by parts gives
\begin{equation}\label{eq:FPstability0}
\frac{\partial }{\partial t} \| f(t) \|^2_{L^2(\Omega)}
= -  \| \bS \nabla f(t) \|^2_{L^2(\Omega)}  + 2 \int\limits_\Omega (\bF f(t)) \cdot \bS^{-1}\bS \nabla f(t) \, dx.
\end{equation}

The last term in \eqref{eq:FPstability0} can be estimated using the \ind{Young's inequality},
$2bd \le kb^2 + d^2/k$ with $k = \|\bS^{-1}\|_2$, which is the $L^2$ matrix norm of $\bS^{-1}$. We then obtain the following
$$
\frac{\partial }{\partial t} \| f(t) \|^2_{L^2(\Omega)}
\le   \|\bS^{-1}\|^2_2 N^2 \|  f(t) \|^2_{L^2(\Omega)}  .
$$
Applying the \ind{Gronwall's inequality} gives the desired result.
\end{proof}

The above results imply that the map $\Lambda : U_{ad} \to C([0,T_f];H^1(\Omega))$ given by $f=\Lambda(\bU)$, is \ind{continuous} and \ind{Fr\'echet differentiable}. The main theoretical result in this work, which shows that there exists an optimal open-loop control $\bU^*$, is given below:
\begin{theorem}\label{th:existence}
Let $f_0 \in H^1(\Omega)$ and let $J$ be given as in \eqref{eq:min_problem}. Then there exists $(f^*,\bU^*) \in C([0,T_f];$ $H^1(\Omega)) \times U_{ad}$ with $f^*$ being a solution to $\cE(f_0,\bU^*)=0$ and $\bU^*$ minimizing $J$ in $U_{ad}$.
\end{theorem}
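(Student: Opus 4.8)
The plan is to use the direct method of the calculus of variations, exploiting the continuity and boundedness properties of the control-to-state map $\Lambda$ established after Proposition \ref{th:stability}. Since $J(f,\bU)\ge 0$ for every admissible pair, the infimum $j^\star := \inf_{\bU\in U_{ad}} J(\Lambda(\bU),\bU)$ is finite and non-negative; fix a minimizing sequence $\{\bU_n\}\subset U_{ad}$ with $J(\Lambda(\bU_n),\bU_n)\to j^\star$ and set $f_n := \Lambda(\bU_n)$. Because $U_{ad}$ is bounded, convex and closed in $L^2([0,T])^2$, it is weakly sequentially compact, so along a subsequence (not relabelled) $\bU_n \rightharpoonup \bU^*$ weakly in $L^2([0,T])^2$ with $\bU^*\in U_{ad}$.

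First I would derive uniform a priori bounds on $f_n$. Proposition \ref{th:stability} together with the energy identity \eqref{eq:FPstability0} bounds $\{f_n\}$ in $L^2([0,T_f];H^1(\Omega))\cap C([0,T_f];L^2(\Omega))$, with constants depending only on $T_f$, $\bS$, $f_0$ and $N$ as in Proposition \ref{th:stability}, hence uniformly in $n$. Writing the equation in flux form \eqref{eq:FPflux}--\eqref{eq:flux_def} and testing against $\phi\in H^1(\Omega)$ gives $\langle\partial_t f_n,\phi\rangle = -\int_\Omega \mathcal{H}_n\cdot\nabla\phi\,dx$ with $\|\mathcal{H}_n\|_{L^2(\Omega)}\le C(\|\nabla f_n\|_{L^2(\Omega)}+\|f_n\|_{L^2(\Omega)})$, so $\partial_t f_n$ is bounded in $L^2([0,T_f];(H^1(\Omega))^*)$. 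By the Aubin--Lions lemma (using $H^1(\Omega)\hookrightarrow\hookrightarrow L^2(\Omega)\hookrightarrow (H^1(\Omega))^*$), a further subsequence satisfies $f_n\to f^*$ strongly in $L^2(Q)$ and $f_n\rightharpoonup f^*$ weakly in $L^2([0,T_f];H^1(\Omega))$; in particular $f^*\ge 0$ a.e.

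The key step is to pass to the limit in the weak form of $\cE(f_0,\bU_n)=0$ and identify $f^*=\Lambda(\bU^*)$. Every term is linear in $f_n$ except the transport term $\int_Q \bF(x,\bU_n)f_n\cdot\nabla\phi\,dx\,dt$, and here I would use that $\bF$ is affine in the control, $\bF(x,\bU)=\bF_0(x)+\bar u_1\bF_1(x)+\bar u_2\bF_2(x)$ with $\bF_0,\bF_1,\bF_2$ bounded on the bounded domain $\Omega$. The control-free part passes to the limit by the strong $L^2(Q)$ convergence of $f_n$; for a control-dependent piece such as $\int_0^T \bar u_{1,n}(t)\big(\int_\Omega \bF_{1,j}(x)f_n\,\partial_{x_j}\phi\,dx\big)\,dt$, the inner integral converges strongly in $L^2([0,T])$ (since $f_n\to f^*$ in $L^2(Q)$ and $\bF_{1,j}\,\partial_{x_j}\phi$ is bounded for smooth $\phi$) while $\bar u_{1,n}\rightharpoonup \bar u_1^*$ weakly in $L^2([0,T])$, so the product converges. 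Hence $f^*$ solves $\cE(f_0,\bU^*)=0$, and by Proposition \ref{th:proposition1} together with the continuity of $\Lambda$ into $C([0,T_f];H^1(\Omega))$ we get $f^*=\Lambda(\bU^*)\in C([0,T_f];H^1(\Omega))$.

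Finally I would check weak lower semicontinuity of $J$. The tracking term $\frac{\alpha}{2}\int_Q (f-f_d)^2$, with $f_d$ the target profile denoted $f^*$ in \eqref{eq:min_problem}, converges along the subsequence since $f_n\to f^*$ strongly in $L^2(Q)$; the penalties $\frac{\nu_i}{2}\int_0^T\|u_i\|^2\,dt$ are continuous and convex on $L^2([0,T])$, hence weakly lower semicontinuous. Therefore $J(f^*,\bU^*)\le \liminf_n J(f_n,\bU_n)=j^\star$, and admissibility of $(f^*,\bU^*)$ forces equality, so $(f^*,\bU^*)$ minimizes $J$ in $U_{ad}$. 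The main obstacle is the compactness argument for $\{f_n\}$ and the passage to the limit in the bilinear state--control term; everything else is routine. A minor point worth care, if $\partial\Omega$ is only Lipschitz, is that the regularity quoted in Proposition \ref{th:proposition1} and the remark following Proposition \ref{th:stability} genuinely delivers membership of $f^*$ in $C([0,T_f];H^1(\Omega))$ as asserted in the statement.
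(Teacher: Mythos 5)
Your proposal is correct and follows essentially the same route as the paper: the direct method with weak sequential compactness of the bounded, closed, convex set $U_{ad}$, uniform energy bounds on the states from Proposition \ref{th:stability}, Aubin--Lions compactness to obtain strong $L^2(Q)$ convergence of $f_n$, and passage to the limit in the state equation followed by weak lower semicontinuity of $J$. The only difference is one of detail: you make explicit, via the affine dependence of $\bF$ on $\bU$ and the strong-times-weak convergence argument, the identification of the limit of the bilinear term $\bF(\bU_n)f_n$ (and you rightly flag the $C([0,T_f];H^1(\Omega))$ regularity of $f^*$ as needing the stronger regularity asserted after Proposition \ref{th:stability}), steps the paper asserts without elaboration.
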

\begin{proof}
Since $J$ is bounded, a \ind{minimizing sequence} $(\bU^m)$ exists in $U_{ad}$. Moreover, $J$ is being \ind{coercive} and sequentially \ind{weakly lower semi-continuous} in $U_{ad}$, implies the \ind{boundedness} of this sequence. Due to the fact that $U_{ad}$ is a closed and convex subset of a Hilbert space, the \ind{sequence} $(\bU^m)$ contains a \ind{convergent}
\ind{subsequence} $(\bU^{m_l})$ in $U_{ad}$, such that $\bU^{m_l} \rightarrow \bU^*$.
Correspondingly, the sequences $(f^{m_l}), (\partial_t f^{m_l})$, where $f^{m_l}=\Lambda(\bT^{m_l})$,
are bounded in $L^2([0,T_f]; H^1(\Omega) ), ~L^2([0,T_f]; H^{-1}(\Omega))$, respectively. This implies the \ind{weak convergence} of the sequences to $f^*$ and $\partial_t f^* $, respectively. We next use the compactness result of \ind{Aubin-Lions} \cite{Lions1969} to obtain strongly convergent subsequence $(f^{m_k})$ in $L^2([0,T_f],L^2(\Omega))$.
Thus, the sequence $(\bF(\bU^{m_k})f^{m_k})$ in $L^2([0,T_f],L^2(\Omega))$ is weakly convergent. This implies that $f^*=\Lambda(\bU^*)$,
and $(f^*,\bU^*)$ is a minimizer of $J$.
\end{proof}

For the minimization problem \eqref{eq:min_problem}, the \ind{optimality system} can now be written as

\begin{equation}\label{eq:opt_FP}
\begin{aligned}
&\partial_t f(x,t)+\nabla \cdot (\bF(x,\bU)~f(x,t)) =
\frac{1}{2}\nabla \cdot (\bS^2 \nabla f(x,t)), \qquad \mbox{ in } \Omega\times(0,T_f),\\
&f(x,0) =  f_0(x), \qquad \mbox{ in } \Omega,\\
&\mathcal{H}\cdot \hat n = 0, \qquad \mbox{ on } \partial\Omega\times(0,T_f).
\end{aligned}\tag{FOR}
\end{equation}

\begin{equation}\label{eq:opt_adj}
\begin{aligned}
-\partial_t p(x,t)-f(x,t)(&\bF(x,\bT)\cdot \nabla p(x,t)) -
\frac{1}{2}\nabla \cdot (\bS^2 \nabla p(x,t)) = -\alpha (f(x,t)-f_i^*(x,t)), ~\mbox{ in } \Omega\times(0,T_f),\\
& p(x,T_f)=0, \qquad \mbox{ in } \Omega,\\
&\frac{\partial{p}}{\partial{n}}  = 0, \qquad  \mbox{ on } \partial\Omega\times(0,T_f).\\
\end{aligned}\tag{ADJ}
\end{equation}

\begin{equation}\label{eq:opt_cond}
\Big\langle{ \beta \bU - \int_\Omega\nabla_{\bU} \bF\cdot \nabla p , \bps-\bU }\Big\rangle_{L^2([0,T])} \geq 0,\qquad \forall \bps \in U_{ad}. \tag{OPT}
\end{equation}

The optimality system comprises of three sets of equations: the forward or \ind{state equation} that governs the FP dynamics \eqref{eq:opt_FP}, the \ind{adjoint equation} \eqref{eq:opt_adj}, and the \ind{optimality condition} \eqref{eq:opt_cond}. In the next section, we describe \ind{numerical schemes} to implement the optimality system.

\subsection{Numerical schemes}\label{sec:numerical_FP}

We consider the mesh $\lbrace\Omega_h\rbrace_{h>0}$ given by
$$
\Omega_h = \lbrace(x_1,x_2,x_3,x_4)\in\mathbb{R}^4:(x_{1i},x_{2j},x_{3k},x_{4l}) = (x_{10}+ih,x_{20}+jh,x_{30}+kh,x_{40}+lh)\rbrace,
$$
where $(i,j,k,l)\in
\lbrace{0,\hdots,N_{x_1}}\rbrace\times \lbrace{0,\hdots,N_{x_2}}\rbrace\times \lbrace{0,\hdots,N_{x_3}}\rbrace\times \lbrace{0,\hdots,N_{x_4}}\rbrace\cap\Omega$, and $N_{x_i}$ is the number of discretization points along the $i^{th}$ coordinate direction. Define $\delta{t}=T_f/N_t$ to be the temporal discretization step, where $N_t$ denotes the maximum number of temporal steps. This gives us the discretized domain for $\Omega$ as follows
$$
Q_{h,\delta{t}} = \lbrace{(x_{1i},x_{2j},x_{3k},x_{4l},t_m):\, (x_{1i},x_{2j},x_{3k},x_{4l})\in\Omega_h,~t_m=m\delta{t},~0\leq m\leq N_t}\rbrace.
$$
The value of $f(x,t)$ on $Q_{h,\delta{t}}$ is denoted as $f_{i,j}^m$. To solve the forward Fokker-Planck equation \eqref{eq:FP}, we use the scheme described in \cite{Roy_cancer2}, that is comprised of the spatial discretization using the \ind{Chang-Cooper} (CC) method \cite{CC}. The temporal derivative discretization is done using the four-step, alternate direction implicit \ind{Douglas-Gunn} (DG4) method. The combined scheme is referred to as the DG4-CC scheme. It has been shown in \cite{Roy_cancer2} that this scheme is \ind{positive}, conservative, stable, and \ind{second order convergent} in the $L^1$ norm.  Furthermore, we use the temporal D-G scheme for the time discretization in the first term, one sided \ind{finite difference} discretization for the second term, and \ind{central difference} for the third term on the left hand side of the adjoint equation \eqref{eq:opt_adj}.

To solve the optimization problem \eqref{eq:min_problem}, we use a \ind{projected} \ind{non-linear conjugate gradient} scheme (PNCG), as described in \cite{Roy_cancer1,Roy_cancer2}. Such a scheme has also been used for solving optimization problems related to controlling stochastic crowd motion \cite{Roy2016,Roy2018a}, studying \ind{avoidance} behavior of pedestrians using game theory \cite{Roy2017}, and \ind{cure rate models} \cite{Pal2020,Pal2021}. The PNCG scheme is described below:

\begin{algorithm}[PNCG scheme]\label{algo3}\ 
\begin{enumerate}
\item Input: Initial guess $u_0$. Compute $d_0 = -\nabla\hat{J}(u_0)_{H^1}$. Set $k=0$ and maximum number of iterations $k_{max}$, tolerance $TOL$.
\item If $(k<k_{max})$ do
\item Compute $u_{k+1} = P_{U}\left[ u_k + \alpha_k \, d_k \right]$, with $\alpha_k$ obtained by the Armijo line-search method.
\item Evaluate $g_{k+1} = \nabla\hat{J}(u_{k+1})_{H^1}$.
\item Evaluate $\beta_k^{HG} =\frac{1}{d_k^Ty_k}\left({y_k-2d_k\frac{\|y_k\|^2}{d_k^Ty_k}}\right)^Tg_{k+1}.$
\item Compute $d_{k+1}=-g_{k+1}+\beta_k^{HG}d_k$.
\item Check for $\|u_{k+1}-u_k\|_2 < TOL$. If yes, terminate.
\item Update $k=k+1$.
\item End if.
\end{enumerate}
\end{algorithm}

\subsection{Results}\label{sec:results}

This section describes the numerical results for obtaining optimal treatment strategies with the aforementioned FP framework. We choose our domain $\Omega = (0,6)^4$ and discretize it using $N_{x_i} = 51$ points for $i = 1,2,3,4$. The final time $T_f$ is chosen to be 10 and the maximum number of time steps $N_t$ is chosen to be 200. The values of the constants used in converting the ODE system \eqref{eq:ODE} to its non-dimensional form given in \eqref{eq:NODE} are given as $k_1 = 10^{-10},~k_2 = 10^{-5},~k_3 = 10^{-7},~k_4 = 10^{-8}$, and $k_5=1$. To obtain the target PDF $f^*(x,t)$, we simulate the ODE system \eqref{eq:NODE} with $\bar{u}_1,\bar{u}_2$ set to 0 and with the value of the non-dimensional parameters $(\bar{d},\bar{l},\bar{s}) = (2.1,1.1,1.25) $ that represents a patient with strong immune response \cite{pillis2014}. The values of the other parameters are taken from \cite{pillis2014}. After we obtain the trajectories of $\bar{T},\bar{N},\bar{L},\bar{C}$, we then choose 20 time points $t_i$, and at each point $(\bar{T}(t_i),\bar{N}(t_i),\bar{L}(t_i),\bar{C}(t_i))$ assign a Gaussian PDF given by a normal distribution with variance 0.05. We finally perform a 5D interpolation to obtain the desired PDF $f^*(x,t)$. Based on a statistical analysis of the \ind{dataset} given in \cite{pillis2014}, we choose \[
\sigma_i(x) = 0.5(x_i^{1.2}+0.001),~ i = 1,2,3,4.
\]
The regularization parameters are chosen to be $\alpha = 1,~ \nu_1 = \nu_2 = 0.01$.
The maximum tolerable dosage for Doxorubicin is taken to be $7$ mg/day and for IL-2 is taken to be $7.2\times 10^5$ IU IL-2/l/day, which implies, $D_1 = 7$ and $D_2 = 0.072$.

In Test Case 1, we simulate a tumor patient with a value of $(T(0),N(0),L(0),C(0)) = (2\times 10^9, 10^5, 10^7, 10^8)$.  This implies $(\bar{T}(0),\bar{N}(0), \bar{L}(0),\bar{C}(0)) = (0.2,1,1,1))$. We also choose the values of the non-dimensional parameters $(\bar{d},\bar{l},\bar{s}) = (1.3,2,10) $ that represents patient with \ind{weak immune system}.
The goal is to determine optimal drug dosages $\bar{u}_i,~ i = 1,2$ such that the tumor profile is given by the target PDF $f^*$.

\begin{figure}[H]
\centering
\subfloat[$T$ (without treatment)]{\includegraphics[width=0.25\textwidth, height=0.25\textwidth]{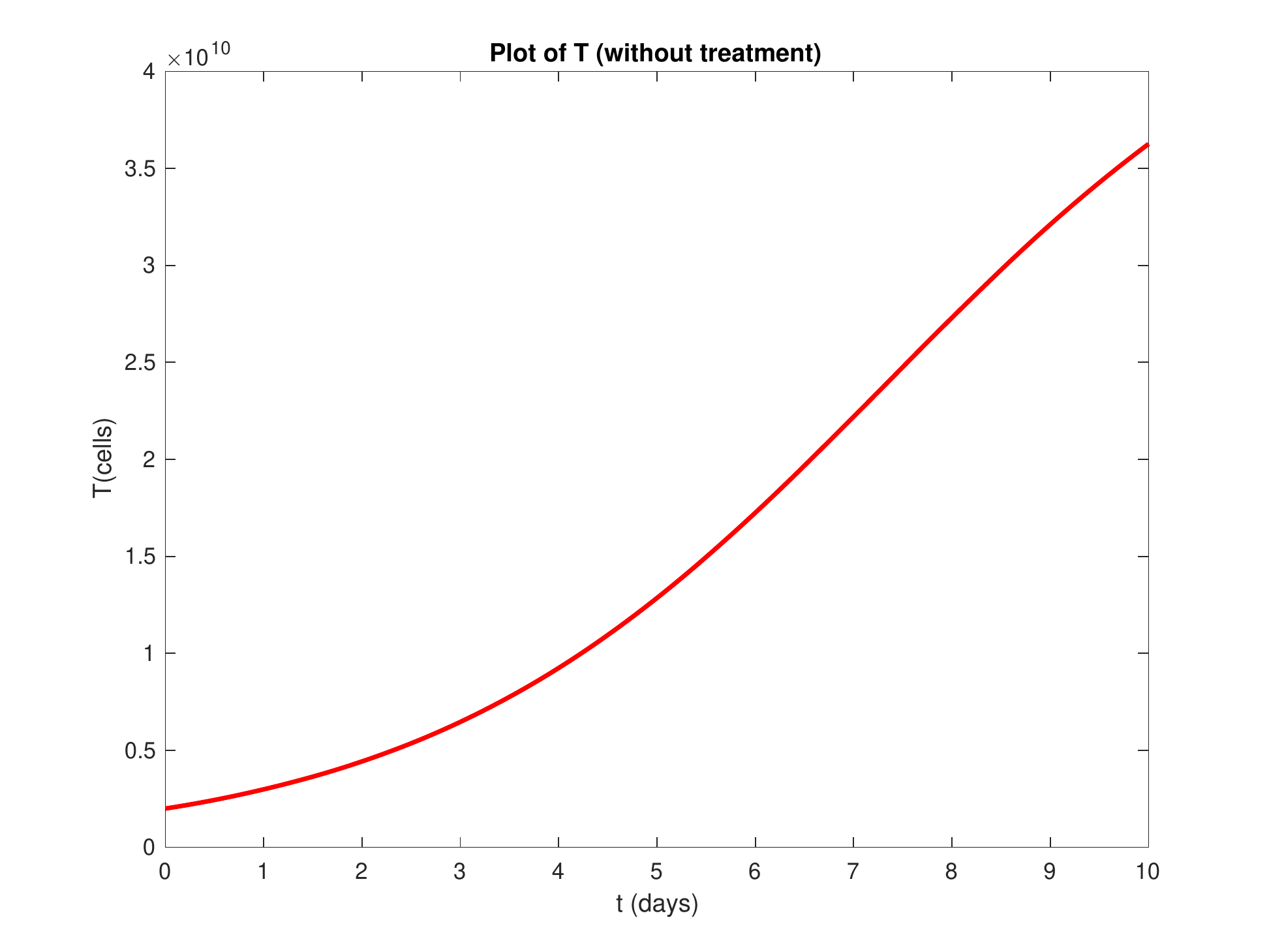}}
\subfloat[$T$ (with treatment)]{\includegraphics[width=0.25\textwidth, height=0.25\textwidth]{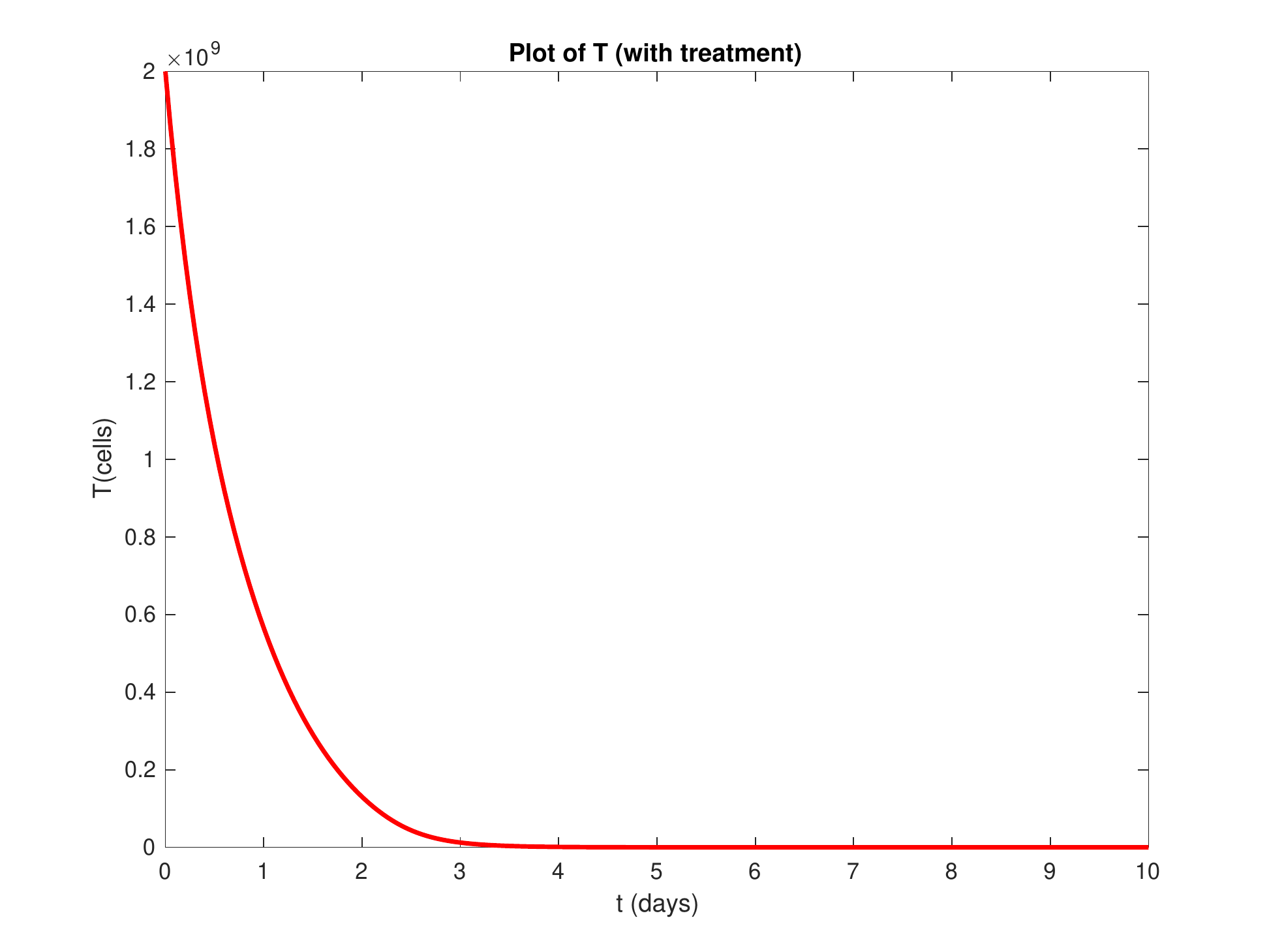}}
\subfloat[Doxorubicin profile]{\includegraphics[width=0.25\textwidth, height=0.25\textwidth]{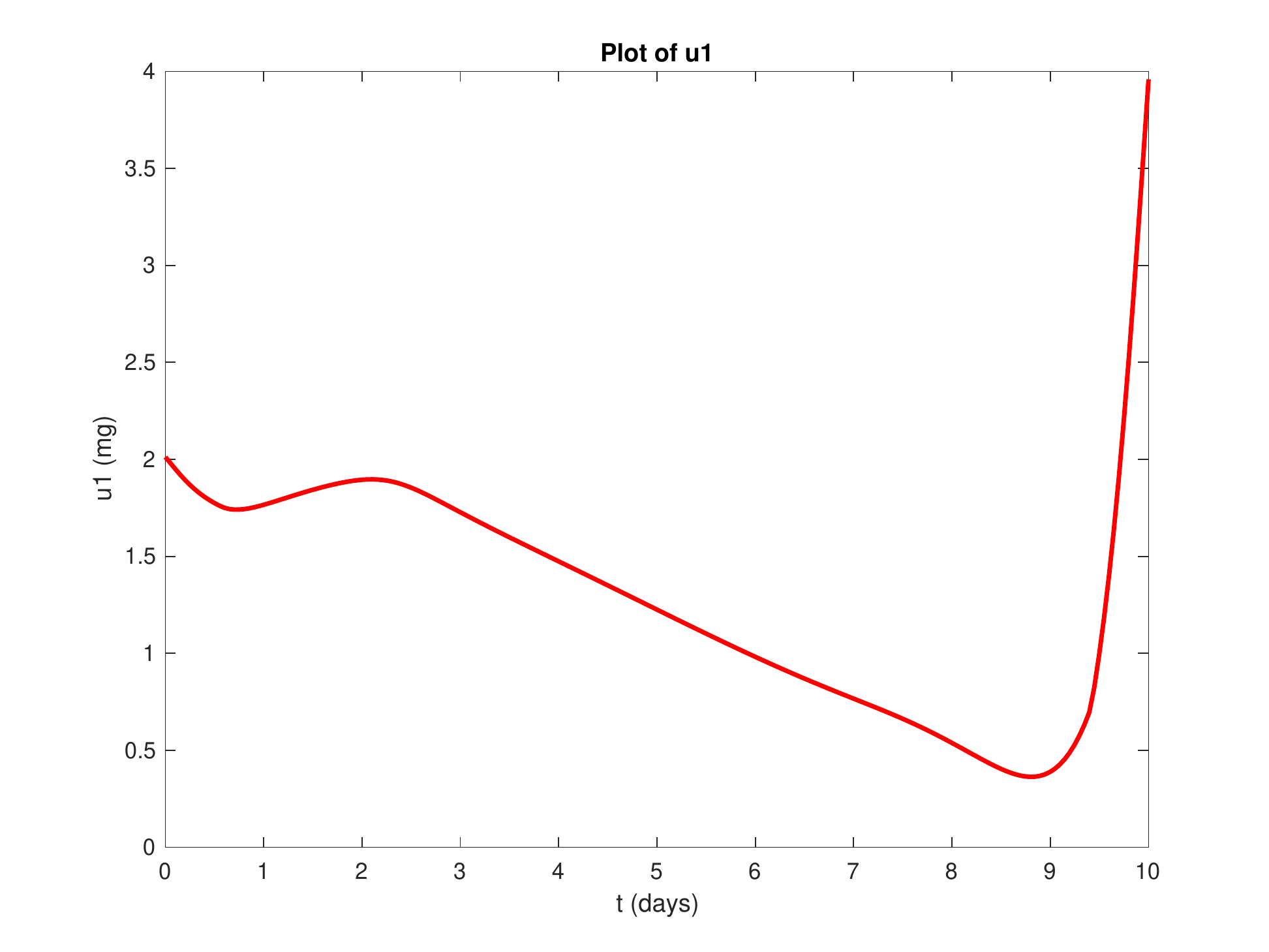}}
\subfloat[IL-2 profile]{\includegraphics[width=0.25\textwidth, height=0.25\textwidth]{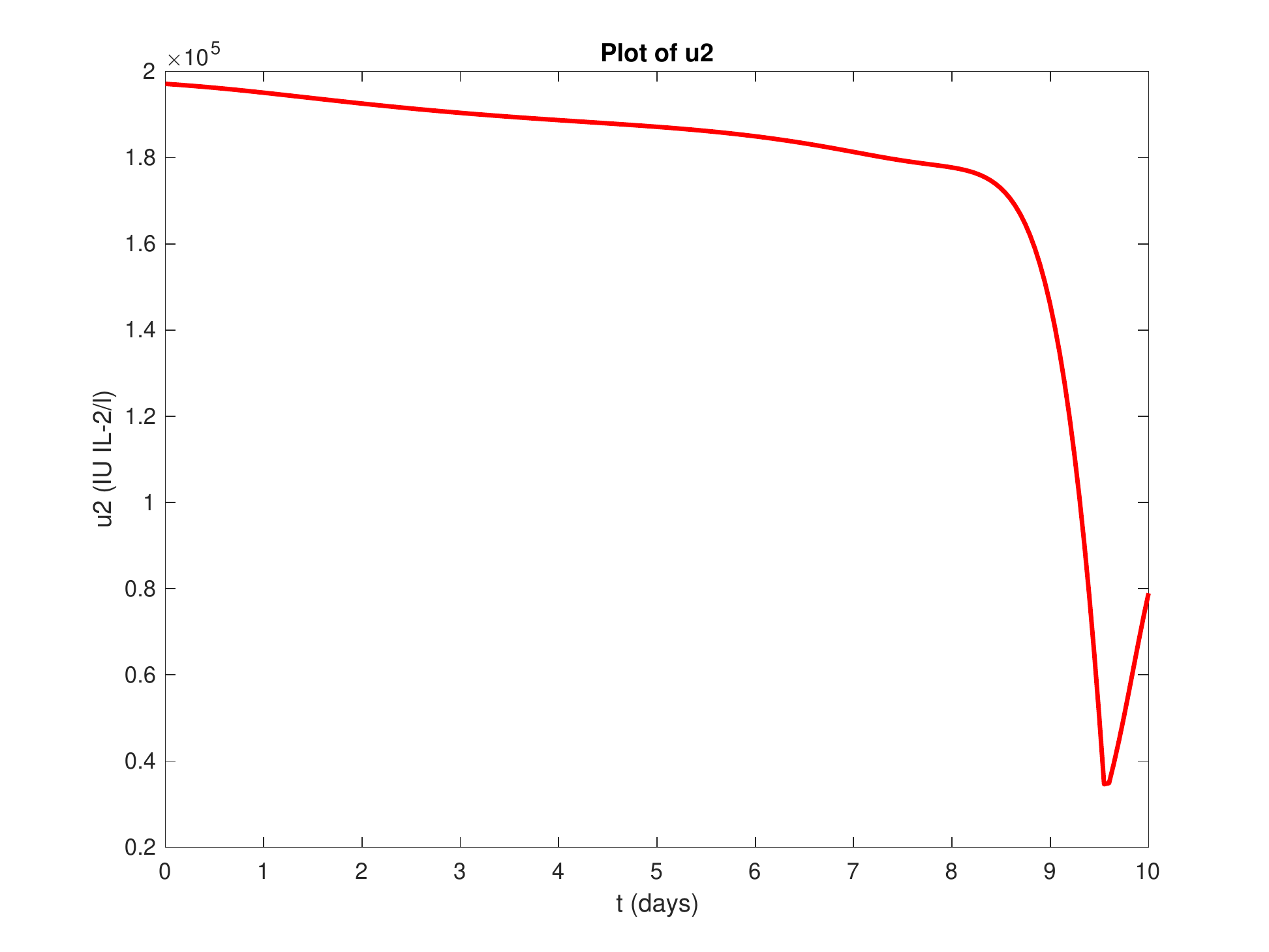}}\\

\caption{Test Case 1: Plots of the mean tumor profiles and drug dosages over the 10 day period}
    \label{fig:test_case1}
  \end{figure}
Figure \ref{fig:test_case1} presents the simulation results obtained with our framework. We observe that without the combination therapy, the mean tumor cell count will keep on increasing till it reaches the cell carrying capacity, ultimately leading to patient death. With the combination therapy, the mean tumor \ind{cell count} is brought back to diminishing levels. We also note the optimal dosage patterns for both Doxorubicin and IL-2 over time. Traditionally, Doxorubicin is administered every 21 days with a dosage of 142.5 mg \cite{pillis2009}, that translates to a total dosage of 70 mg for 10 days. However, from Figure \ref{fig:test_case1}, we note that the total dosage of Doxorubicin over 10 days is far less than 70 mg. Moreover, the observed daily dosage of IL-2 from Figure \ref{fig:test_case1} is less than the standard dosage of IL-2 over a day, which is $2.1\times10^6$ IU IL-2/l. This suggests that optimal combination therapies administered daily leads to lower total dosages and, thus, lower toxicity effects.

 In Test Case 2, we choose the same initial values of $\bar{T},\bar{N}, \bar{L},\bar{C}$, along with the values of the non-dimensional parameters $(\bar{d},\bar{l},\bar{s}) = (1.6,1.4,2) $ that represents patient with \ind{moderately strong immune system}.

\begin{figure}[H]
\centering
\subfloat[$T$ (without treatment)]{\includegraphics[width=0.25\textwidth, height=0.25\textwidth]{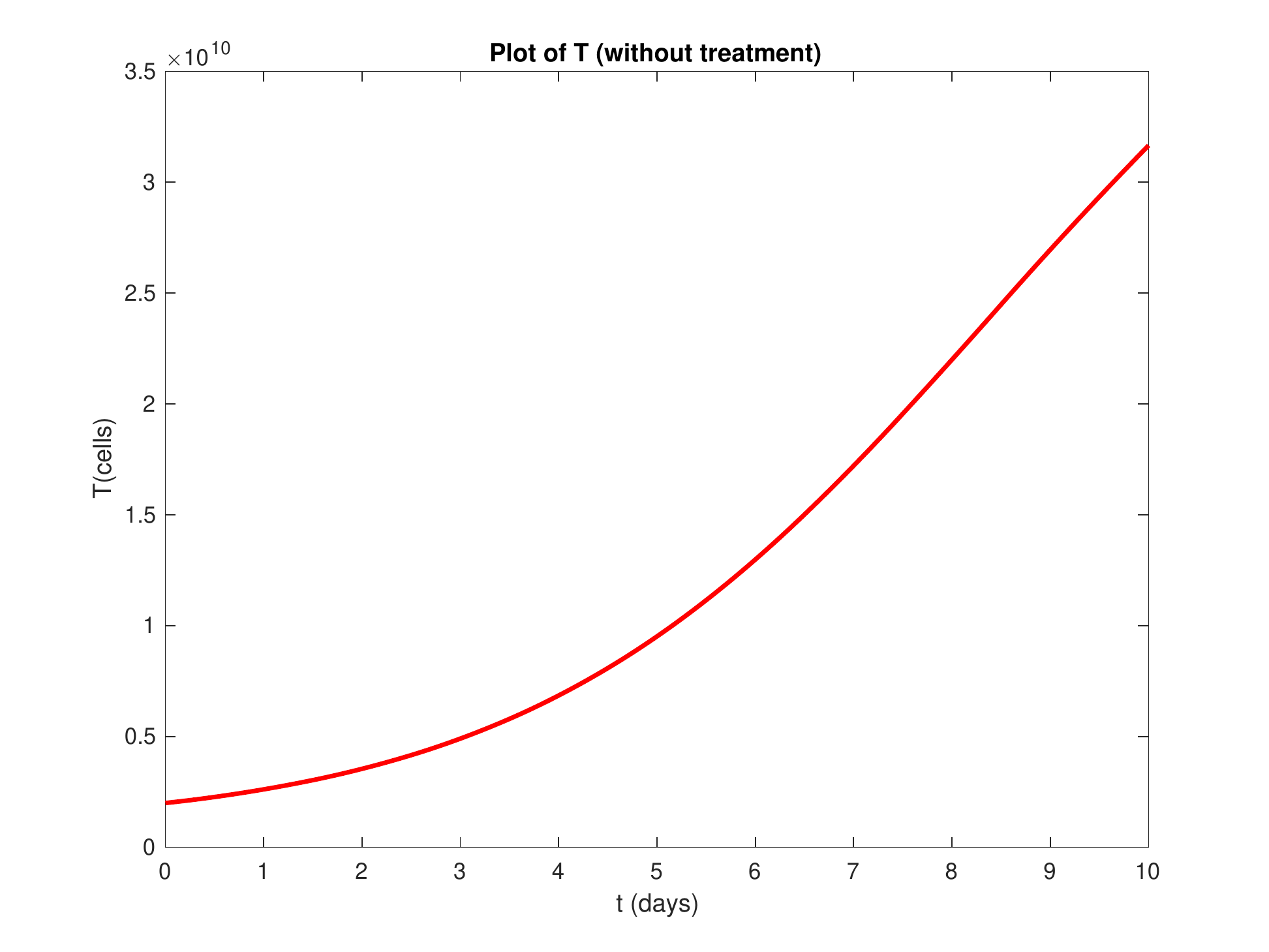}}
\subfloat[$T$ (with treatment)]{\includegraphics[width=0.25\textwidth, height=0.25\textwidth]{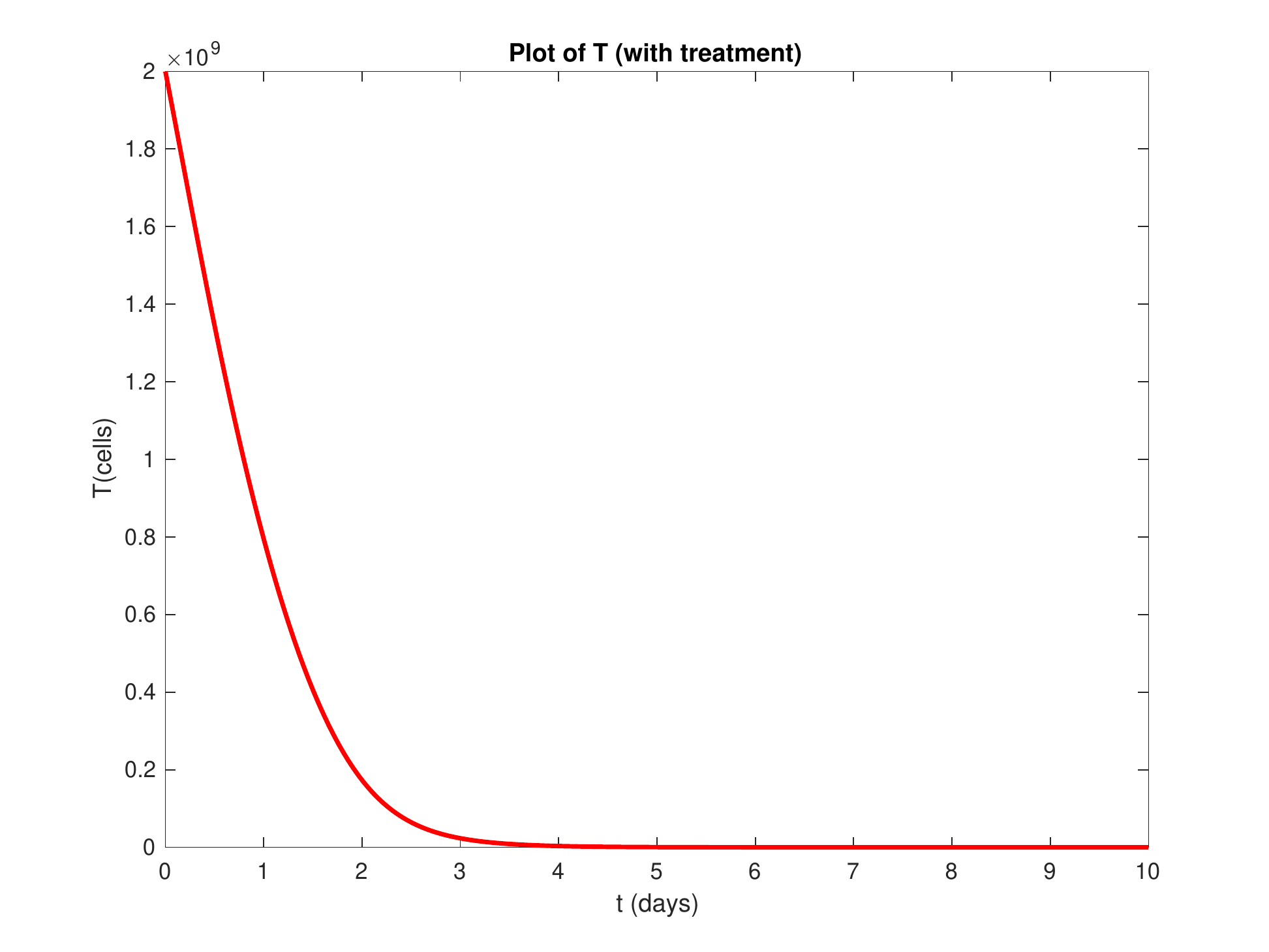}}
\subfloat[Doxorubicin]{\includegraphics[width=0.25\textwidth, height=0.25\textwidth]{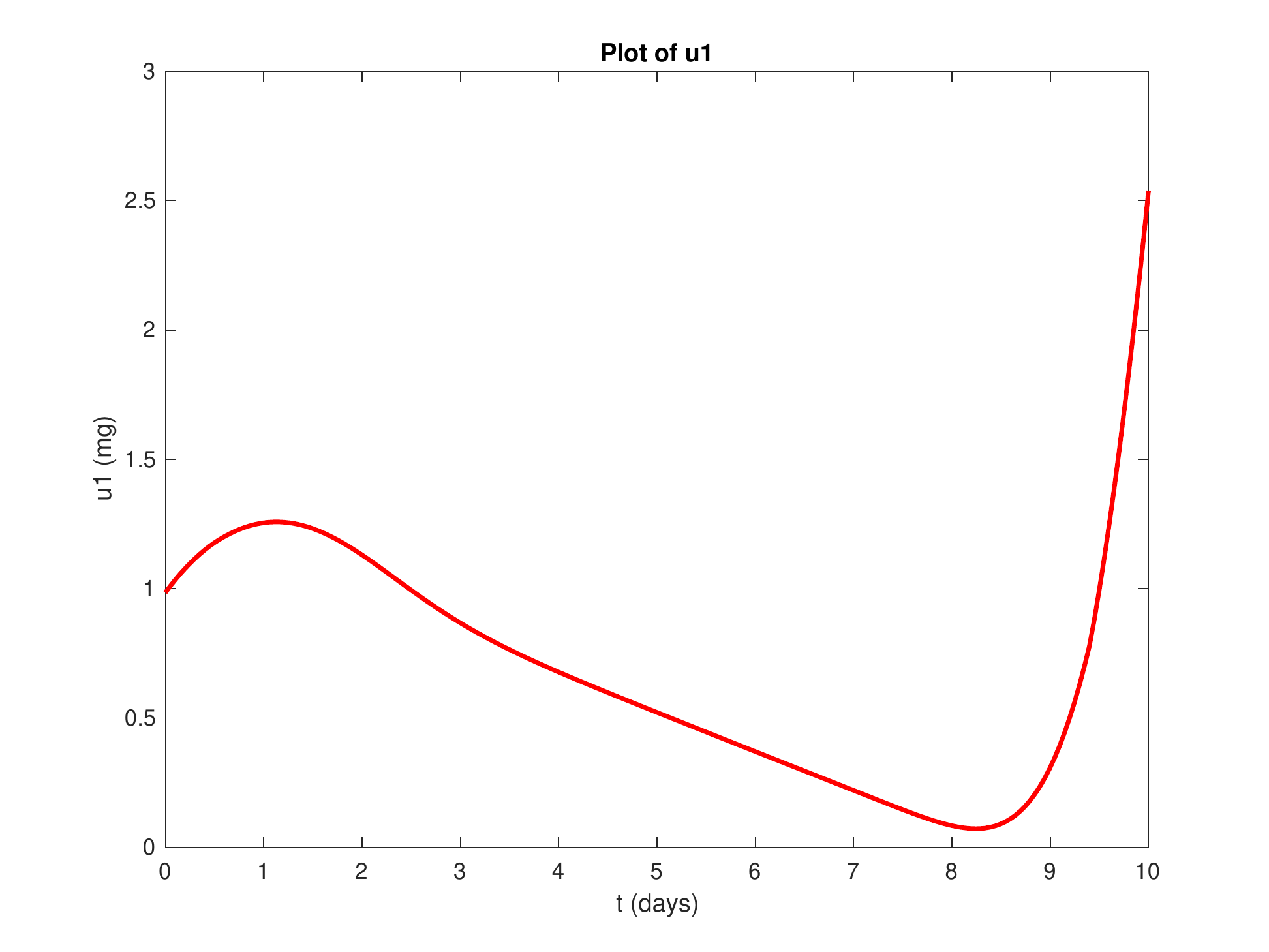}}
\subfloat[IL-2]{\includegraphics[width=0.25\textwidth, height=0.25\textwidth]{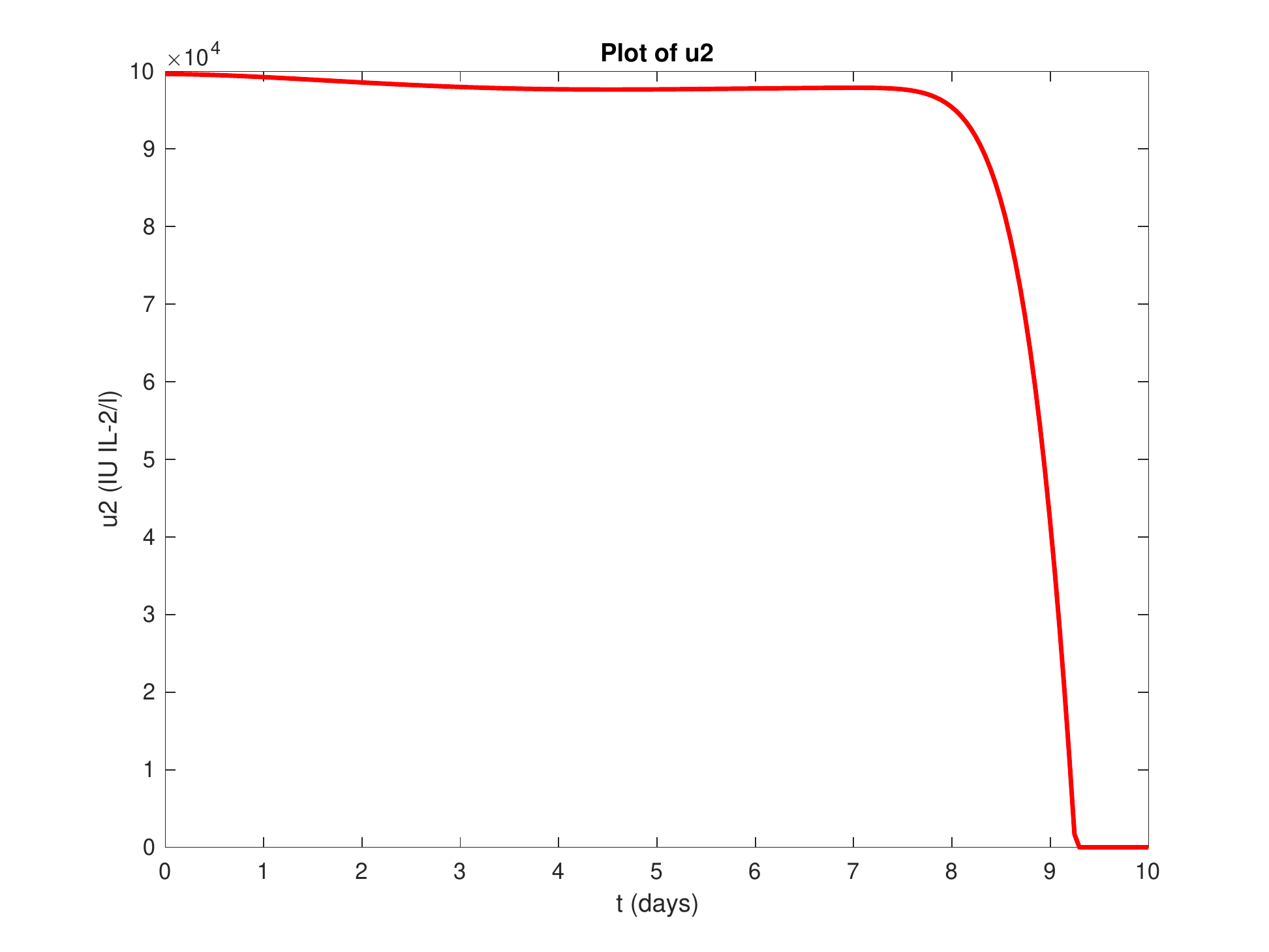}}\\

\caption{Test Case 2: Plots of the mean tumor profiles and drug dosages over the 10 day period}
    \label{fig:test_case2}
  \end{figure}
Figure \ref{fig:test_case2} shows the plots of the tumor profiles without and with treatment, and the optimal daily dosages of Doxorubicin and IL-2. We observe a similar behavior as Test Case 1. In addition, we note that the dosages are smaller compared to the patient in Test Case 1, since the immune system is moderately strong. This shows the effectiveness of our framework in obtaining optimal dosages in colon cancer.

\subsection{Conclusion}

 In this paper, we presented a new stochastic framework to determine optimal combination therapies in colon cancer-induced immune response. We considered the tumor and immune system response dynamics as proposed in \cite{pillis2014} and extend it to a stochastic process to account for incorporating \ind{randomness} in the dynamics. We characterized the state of the stochastic process using the PDF, whose evolution is governed by the FP equation. We then solved an optimal control problem with open loop controls, to obtain the optimal combination \ind{dosages} involving chemotherapy and immunotherapy. Numerical results demonstrate the feasibility of our proposed framework to obtain small dosages of the combination drugs leading to lower \ind{toxicity} whilst preserving the effectiveness to eliminate the tumor.

\subsection*{Acknowledgments} 

The authors were supported by the National Institutes of Health (Grant Number: R21CA242933). S. Roy was also partially supported by the Interdisciplinary Research Program, University of Texas at Arlington,
Grant number: 2021-772. The content is solely the responsibility of the authors and does not necessarily represent the official views of the National Institutes of Health.

\bibliographystyle{abbrv}
\bibliography{Bibliography}

\newpage



\end{document}